\newtheorem{theorem}{Theorem}[section]
\newtheorem{claim}[theorem]{Claim}
\newtheorem{lemma}[theorem]{Lemma}
\newtheorem{corollary}[theorem]{Corollary}
\theoremstyle{definition}
\newtheorem{definition}[theorem]{Definition}
\theoremstyle{remark}
\newtheorem{remark}[theorem]{Remark}
\newcommand{\dom}{{\rm dom}}
\newcommand{\bbP}{{\mathbb P}}
\newcommand{\bbQ}{{\mathbb Q}}
\newcommand{\Add}{{\rm Add}}
\newcommand{\cf}{{\rm cf}}
\newcommand{\CH}{{\rm CH}}
\newcommand{\ZFC}{{\rm ZFC}}
\newcommand{\Th}{{\rm Th}}
\newcommand{\Cov}{{\rm Cov}}
\newcommand{\meagre}{{\rm meagre}}
\newcommand{\MA}{{\rm MA}}
\newcommand{\countable}{{\rm countable}}
\def\mathunderaccent#1#2 {\let\theaccent#1\skewfactor#2
\mathpalette\putaccentunder}
\def\putaccentunder#1#2{\oalign{$#1#2$\crcr\hidewidth
\vbox to.2ex{\hbox{$#1\skew\skewfactor\theaccent{}$}\vss}\hidewidth}}
\begin{document}

\title {The Keisler-Shelah isomorphism theorem and the continuum hypothesis}

\author[M.  Golshani]{Mohammad Golshani}

\address{Mohammad Golshani, School of Mathematics, Institute for Research in Fundamental Sciences (IPM), P.O.\ Box:
	19395--5746, Tehran, Iran.}

\email{golshani.m@gmail.com}

\author[S. Shelah] {Saharon Shelah}
\address{Einstein Institute of Mathematics\\
Edmond J. Safra Campus, Givat Ram\\
The Hebrew University of Jerusalem\\
Jerusalem, 91904, Israel\\
 and \\
 Department of Mathematics\\
 Hill Center - Busch Campus \\
 Rutgers, The State University of New Jersey \\
 110 Frelinghuysen Road \\
 Piscataway, NJ 08854-8019 USA}
\email{shelah@math.huji.ac.il}
\urladdr{http://shelah.logic.at}
\thanks{ The first author's research has been supported by a grant from IPM (No. 1400030417). The
	second author's research has been partially supported by Israel Science Foundation (ISF) grant no:
	1838/19. This is publication 1215 of second author.}

\subjclass[2020]{Primary: 03C20, 03E35 }

\keywords {}

%\date{May 7, 2012}

\begin{abstract}
We show that if for any  two elementary equivalent structures
$\bold M, \bold N$ of size at most continuum in a countable language,  $\bold M^{\omega}/ \mathcal U \simeq \bold N^\omega / \mathcal U$ for some ultrafilter $\mathcal U$ on $\omega,$ then $\CH$ holds. We also provide some consistency results about Keisler and Shelah isomorphism theorems in the absence of
$\CH$.
\end{abstract}

\maketitle
\numberwithin{equation}{section}
\section{introduction}
The Keisler-Shelah isomorphism theorem provides a characterization of elementary equivalence. It says that two models of a theory are elementarily equivalent if and only if they have isomorphic ultrapowers.

Let the \emph{Keisler criterion (for elementary equivalence)} be the statement: for any two structures
$\bold M, \bold N$ of size $\leq 2^{\aleph_0}$ in a countable language, $\bold M \equiv \bold N$ if and only if  $\bold M^{\omega}/ \mathcal U \simeq \bold N^\omega / \mathcal U$ for some ultrafilter $\mathcal U$ on $\omega.$

In \cite{keislr} (see also \cite{keislerphd}), Keisler showed that the Keisler criterion follows from $\CH$.
The result is trivial if at least one of $\bold M, \bold N$ is finite, so assume otherwise. He showed that for any non-principal ultrafilter
$\mathcal U$ on $\omega$ the ultrapowers $\bold M^{\omega}/ \mathcal U$
and $\bold N^{\omega}/ \mathcal U$ are $\aleph_1$-saturated of size $2^{\aleph_0}$. Thus, under $\CH$, both are saturated
of the same size and the result
  follows from the uniqueness of saturated models.

Later Shelah \cite{sh:iso} removed the $\CH$ assumption in Keisler's theorem, by showing that if $\mathcal L$ is a countable language  and $\bold M, \bold N$ are  countable $\mathcal L$-models,
then $\bold M \equiv \bold N$ if and only if there exists an ultrafilter $\mathcal U$ on $2^\omega$ such that  $\bold M^{\omega}/ \mathcal U \simeq \bold N^\omega / \mathcal U.$

 In \cite{sh:vive}, Shelah has constructed a model of $\ZFC$ in which $2^{\aleph_0}=\aleph_2$
and in which there are countable graphs $\bold \Delta \equiv \bold \Gamma$  such that for no ultrafilter $\mathcal U$
on $\omega,$  $\bold \Delta^{\omega}/ \mathcal U \simeq \bold \Gamma^\omega / \mathcal U.$ This shows that
$\CH$ is an essential assumption for Keisler's theorem, even for countable models.

In this paper we discuss some variants of the Keisler's criterion, in particular in the absence of $\CH$, and prove some related results.
First we show that Keisler's criterion  is indeed equivalent to $\CH$ by proving the following theorem.
\begin{theorem}
\label{thm1}
 Suppose $2^{\aleph_0} \geq \aleph_2$. Then the Keisler criterion fails.
\end{theorem}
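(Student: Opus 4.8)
The plan is to refute the criterion with a single pair of elementarily equivalent linear orders whose ultrapowers, by \emph{any} ultrafilter on $\omega$, are non-isomorphic. The distinguishing invariant will be the cofinality of the ultrapower \emph{as a linear order} (an isomorphism invariant). I will build $L_1 \equiv L_2$ of size $\le 2^{\aleph_0}$ with $\cf(L_1) = \aleph_1$ and $\cf(L_2) = \aleph_2$, and show that this cofinality passes unchanged to every ultrapower.

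Concretely, I would take $L_i = \kappa_i \times \mathbb{Q}$ ordered lexicographically, with $\kappa_1 = \aleph_1$ and $\kappa_2 = \aleph_2$; the hypothesis $2^{\aleph_0} \ge \aleph_2$ ensures $|L_i| = \kappa_i \le 2^{\aleph_0}$, and this is precisely where it is used. Each $L_i$ is a dense linear order without endpoints, so completeness of that theory gives $L_1 \equiv L_2$ for free, securing the elementary-equivalence side of the criterion. Moreover $\cf(L_i) = \kappa_i$, witnessed by the increasing cofinal sequence $\langle (\xi,0) : \xi < \kappa_i \rangle$. (Alternatively one may take the ordinals $(\omega_1,<)$ and $(\omega_2,<)$, which are elementarily equivalent by the classical analysis of the first-order theory of ordinals, both being divisible by $\omega^\omega$; the dense presentation is chosen only to make $\equiv$ transparent.)

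The heart of the matter is the claim that if $L$ is a linear order with no last element and $\cf(L) = \kappa$ is regular and uncountable, then $\cf(L^\omega/\mathcal{U}) = \kappa$ for every ultrafilter $\mathcal{U}$ on $\omega$. Fix an increasing cofinal sequence $\langle a_\xi : \xi < \kappa \rangle$ and consider the constants $[c_{a_\xi}]$ in the ultrapower. Given any $[f]$ with $f : \omega \to L$, each $f(n)$ lies below some $a_{\xi_n}$; since $\cf(\kappa) > \omega$ the countable set $\{\xi_n : n < \omega\}$ is bounded by some $\xi^* < \kappa$, so $f(n) < a_{\xi^*+1}$ for every $n$ and hence $[f] < [c_{a_{\xi^*+1}}]$. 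Thus $\{[c_{a_\xi}] : \xi < \kappa\}$ is cofinal; and if some $X$ were cofinal with $|X| < \kappa$, the same bounding argument attaches to each member of $X$ a constant index below $\kappa$, whereupon regularity of $\kappa$ bounds all these indices by a single $\eta < \kappa$, making $[c_{a_{\eta+1}}]$ an upper bound of a cofinal set in an order with no top --- absurd. Hence $\cf(L^\omega/\mathcal{U}) = \kappa$ exactly. Applying this to $L_1, L_2$ yields $\cf(L_1^\omega/\mathcal{U}) = \aleph_1 \ne \aleph_2 = \cf(L_2^\omega/\mathcal{U})$ for every $\mathcal{U}$, so $L_1^\omega/\mathcal{U} \not\simeq L_2^\omega/\mathcal{U}$ always, while $L_1 \equiv L_2$; the biconditional of the Keisler criterion therefore fails.

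The sole genuine obstacle is this cofinality computation, and its crux is the single inequality $\cf(\kappa) > \omega$: that is what forces every $f : \omega \to L$ to be bounded below $\kappa$ and thereby keeps the ground cofinal sequence cofinal after passing to the ultrapower. Everything else is routine: elementary equivalence is automatic for dense orders, invariance of cofinality under isomorphism is immediate, and the containment $\aleph_2 \le 2^{\aleph_0}$ is exactly what legalizes the size of $L_2$.
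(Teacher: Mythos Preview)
Your argument is correct. The core engine---that an uncountable regular cofinality survives passage to an ultrapower over $\omega$---is exactly what the paper uses (their Claim~\ref{cl1}), and your proof of that lemma is clean and complete.

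The packaging, however, differs in an interesting way. The paper takes $\bold M=(\bbQ,<)$ of size $\aleph_0$ and a single $\bold N$ of size $\aleph_2$ containing two points $a,b$ with $\cf(\bold N_a)=\aleph_1$ and $\cf(\bold N_b)=\aleph_2$; it then exploits that $(\bbQ,<)$ carries a \emph{definable} transitive family of automorphisms $F(x,y,z)=x-y+z$, so that in $\bold M^\omega/\mathcal U$ the cofinality below every point is the same, forcing a contradiction with the two distinct cofinalities on the $\bold N$ side. Your version sidesteps this automorphism trick entirely by putting the two cofinalities into two separate models $L_1,L_2$ and reading off the global cofinality of each ultrapower directly. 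What you gain is simplicity: no need for the definable-automorphism step. What the paper gains is a sharper statement---one of its counterexample models is countable, whereas both of yours have size $\geq\aleph_1$. Either route suffices for Theorem~\ref{thm1} as stated.
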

The counterexample we consider for the above theorem comes from the theory of dense linear orders, see Theorem \ref{thm11}.

It is known from
the work of Ellentuck and Rucker \cite{ellentuck} that if Martin's axiom$+\neg \CH$ holds, then there exists
an ultrafilter $\mathcal U$ on $\omega$ such that for any countable structure $\bold M$, the ultrapower $\bold M^\omega / \mathcal U$
is saturated. In particular if $\bold M \equiv \bold N$ are countable models of the same vocabulary, then
$\bold M^\omega / \mathcal U \simeq \bold N^\omega / \mathcal U.$ We consider models of larger size and ask for the same conclusion.
In particular, we prove the following:
\begin{theorem}
\label{thm2} Suppose $2^{\aleph_0}> \aleph_1=\cf(2^{\aleph_0})$ and $\Cov(\meagre)=2^{\aleph_0}$.
If $\bold M, \bold N$ are models of size $\aleph_1$ in a countable language, then  $\bold M^{\omega}/ \mathcal U \simeq \bold N^\omega / \mathcal U$
for some ultrafilter $\mathcal U$ on $\omega$.
%Suppose $\bold M_0 \equiv \bold M_1$
%are models of size $\leq \aleph_1$ of the same countable vocabulary $\mathcal L$. Then for some ultrafilter $\mathcal U$ on $\omega,$
%$\bold M_0^{\omega}/ \mathcal U \simeq \bold M_1^\omega / \mathcal U.$
%Indeed there exists an ultrafilter $\mathcal U$
%on $\omega$ which works for all pairs $(\bold M_0, \bold M_1)$ as above.
\end{theorem}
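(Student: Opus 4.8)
The plan is to produce a single ultrafilter $\mathcal U$ on $\omega$ for which both $\bold M^\omega/\mathcal U$ and $\bold N^\omega/\mathcal U$ are \emph{special} models, and then to invoke the classical uniqueness theorem (Chang--Keisler) that any two elementarily equivalent special models of the same cardinality are isomorphic. Since the conclusion of the theorem forces $\bold M\equiv\bold N$ (ultrapowers preserve elementary equivalence by \L o\'s's theorem), I may assume $\bold M\equiv\bold N$ and that both are infinite, and set $T=\Th(\bold M)=\Th(\bold N)$. Any nonprincipal ultrafilter on $\omega$ is regular, so $|\bold M^\omega/\mathcal U|=|\bold M|^{\aleph_0}=\aleph_1^{\aleph_0}=2^{\aleph_0}$, and likewise for $\bold N$; thus the two ultrapowers automatically have the common cardinality $2^{\aleph_0}$, and it remains only to make them special.

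First I would fix, using $\cf(2^{\aleph_0})=\aleph_1$, a strictly increasing sequence of cardinals $\langle\lambda_\xi:\xi<\omega_1\rangle$ cofinal in $2^{\aleph_0}$. Since $2^{\aleph_0}$ is singular here, full $2^{\aleph_0}$-saturation of a model of size $2^{\aleph_0}$ is impossible for unstable $T$, which is exactly why the target is specialness rather than saturation: I aim to exhibit each ultrapower as an increasing union $\bigcup_{\xi<\omega_1}A_\xi$ of elementary submodels with each $A_\xi$ being $\lambda_\xi^+$-saturated. The construction is a transfinite recursion of length $2^{\aleph_0}$, organized into $\omega_1$ blocks indexed by $\xi$, along which I build an increasing chain of filters on $\omega$ whose union is $\mathcal U$; enumerating in advance all functions $\omega\to\bold M$ and $\omega\to\bold N$ (there are $2^{\aleph_0}$ of each) and all potential types over their initial segments, at block $\xi$ I handle all types whose parameters lie among the first $\le\lambda_\xi$ elements. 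A single step takes the current filter $\mathcal D$ (generated by fewer than $2^{\aleph_0}$ sets) and a finitely satisfiable type $p(x)$ in the relevant ultrapower, and must choose a function $f$ together with a one-step extension of $\mathcal D$ so that $f$ realizes $p$, i.e.\ $\{n:\bold M\models\phi(f(n),\bar a(n))\}\in\mathcal U$ for every $\phi(x,\bar a)\in p$. I would run the recursions for $\bold M$ and $\bold N$ simultaneously, maintaining one increasing filter, so that the \emph{same} $\mathcal U$ serves both.

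The heart of the matter is this single realization step, and this is where $\Cov(\meagre)=2^{\aleph_0}$ enters. For a finitely satisfiable $p$ the sets $E_q=\{n:\bold M\models\exists x\bigwedge_{\phi\in q}\phi(x,\bar a_\phi(n))\}$, for finite $q\subseteq p$, lie in $\mathcal D$ by \L o\'s's theorem, and finding $f$ realizing $p$ while preserving the finite intersection property amounts to meeting a family of dense sets in a natural poset of finite approximations (a partial $f$ together with a finite promise about the filter). This poset is $\sigma$-centered, indeed reducible to a countable atomless poset, and the number of dense requirements accumulated so far is less than $2^{\aleph_0}$; and $\Cov(\meagre)=2^{\aleph_0}$ is precisely the assertion that $\MA_{<2^{\aleph_0}}$ holds for countable posets, which is exactly what permits meeting all of them and completing the step. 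This is the replacement for the full Martin's axiom used by Ellentuck and Rucker in the regular-continuum case.

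The step I expect to be the main obstacle is making the realization lemma go through for models of size $\aleph_1$ rather than countable ones: with $|\bold M|=\aleph_1$ the naive approximation poset is uncountable, so I must argue that realizing a type reduces to a countable poset — for instance by passing to countable elementary submodels of $\bold M$ that capture the finitely many parameters consulted at each coordinate $n\mapsto f(n)$. Securing this reduction, together with the bookkeeping that keeps the realizing witnesses inside the block so that each $A_\xi$ is self-containedly $\lambda_\xi^+$-saturated (and not merely saturated within the whole ultrapower), is the technical core on which the argument rests; once it is in place, the two special ultrapowers of cardinality $2^{\aleph_0}$ are isomorphic and the theorem follows.
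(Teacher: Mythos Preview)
Your strategy has a genuine gap. You aim to make both ultrapowers \emph{special} of cardinality $2^{\aleph_0}$ and then quote uniqueness, but specialness of a model of size $2^{\aleph_0}$ requires it to contain $\kappa^+$-saturated elementary submodels for all cardinals $\kappa<2^{\aleph_0}$; in particular an $\aleph_2$-saturated piece. For a theory with the independence property (say the random graph) any $\aleph_2$-saturated model has size at least $2^{\aleph_1}$, and nothing in the hypotheses rules out $2^{\aleph_1}>2^{\aleph_0}$: add $\aleph_{\omega_1}$ Cohen reals over a ground model in which $2^{\aleph_1}$ is very large, and one obtains $\Cov(\meagre)=2^{\aleph_0}=\aleph_{\omega_1}$, $\cf(2^{\aleph_0})=\aleph_1$, yet $2^{\aleph_1}>2^{\aleph_0}$. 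In that model no structure of size $2^{\aleph_0}$ in such a theory can be special at all, so your plan cannot succeed. The same obstruction shows up in your bookkeeping: making $A_\xi$ $\lambda_\xi^+$-saturated forces you to enumerate and realize up to $2^{\lambda_\xi}$ types, which may exceed the $2^{\aleph_0}$ many steps available in the recursion.

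The paper sidesteps this by dropping saturation and building the isomorphism \emph{directly} by a back-and-forth of length $2^{\aleph_0}$: one fixes enumerations $\langle f^0_\alpha\rangle$, $\langle f^1_\alpha\rangle$ of $\bold M^\omega$ and $\bold N^\omega$ (each of size exactly $2^{\aleph_0}$), and at stage $\alpha$ matches a single prescribed element on one side with a partner on the other while extending the filter so that their types over the previously matched elements agree. The single step is essentially the realization lemma you describe --- a countable poset of finite partial functions into a countable elementary submodel $\bold M^\ell_i\prec\bold M_\ell$ (exactly the reduction you flagged as the main obstacle), fewer than $2^{\aleph_0}$ dense sets coming from the current filter generators and the finitely many formulas, and $\MA_{<2^{\aleph_0}}(\text{countable})$ from $\Cov(\meagre)=2^{\aleph_0}$. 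The crucial difference is that this step is invoked only $2^{\aleph_0}$ times, once per function, rather than once per abstract type. Your technical ingredients are the right ones; only the target (specialness) is too strong under the stated hypotheses.
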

We also prove a related  consistency result in the generic extension obtained by adding many Cohen reals, which allows us to remove the cofinality restriction
of the above theorem.

\section{Keisler's theorem and the $\CH$}
In this section we prove the following theorem which immediately implies Theorem \ref{thm1}.
\begin{theorem}
\label{thm11}
There are models $\bold M, \bold N$ of the theory $\Th(\bbQ, <)$ of size $\aleph_0, \aleph_2$ respectively
such that for no ultrafilter $\mathcal U$ on $\omega, $
 $\bold M^{\omega}/ \mathcal U \simeq \bold N^\omega / \mathcal U.$
\end{theorem}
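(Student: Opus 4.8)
The plan is to separate the two ultrapowers by an order-theoretic invariant that does not depend on the choice of $\mathcal U$: the cofinality of the order at $+\infty$ compared with its coinitiality at $-\infty$ (the cofinality of the reverse order). First I would dispose of principal $\mathcal U$, where $\bold M^\omega/\mathcal U \cong \bold M$ and $\bold N^\omega/\mathcal U \cong \bold N$ have sizes $\aleph_0$ and $\aleph_2$ and so are non-isomorphic for cardinality reasons. So fix a non-principal $\mathcal U$ on $\omega$. Here cardinality is useless: since every non-principal ultrafilter on $\omega$ is regular, a count gives $|\bold M^\omega/\mathcal U| = 2^{\aleph_0} = \aleph_2^{\aleph_0} = |\bold N^\omega/\mathcal U|$ (using $2^{\aleph_0}\ge\aleph_2$), so a finer, reversal-sensitive invariant is needed.

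The key lemma I would isolate is a cofinality-transfer principle: if a linear order $L$ has a cofinal increasing sequence $(a_\alpha)_{\alpha<\mu}$ of regular \emph{uncountable} length $\mu$, then $\cf(L^\omega/\mathcal U) = \mu$ for every ultrafilter $\mathcal U$ on $\omega$. The argument is short. The constants $([a_\alpha])_{\alpha<\mu}$ form a strictly increasing $\mu$-chain; given $f \in L^\omega$, for each $n$ choose $\beta_n < \mu$ with $f(n) < a_{\beta_n}$, and since $\cf(\mu)=\mu>\aleph_0$ the countable supremum $\beta = \sup_n \beta_n$ satisfies $\beta < \mu$, whence $f(n) < a_\beta$ for all $n$ and so $[f] < [a_\beta]$. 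Thus the chain is cofinal, and, $\mu$ being regular, the cofinality is exactly $\mu$. The mirror-image statement yields the same transfer for coinitiality.

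With this in hand I would take $\bold N$ to be a dense linear order without endpoints of size $\aleph_2$ whose cofinality at $+\infty$ is $\aleph_2$ and whose coinitiality at $-\infty$ is $\aleph_1$; for instance $\bold N = (\omega_1^{\ast} + \omega_2)\times_{\mathrm{lex}} \bbQ$, obtained by replacing each point of $\omega_1^{\ast} + \omega_2$ with a copy of $\bbQ$. By the transfer lemma (and its mirror), $\bold N^\omega/\mathcal U$ has cofinality $\aleph_2$ but coinitiality $\aleph_1$. On the other hand $\bold M$ is the unique countable dense linear order, so $\bold M \cong (\bbQ,<)$, which admits the order-reversing bijection $x\mapsto -x$; hence $\bold M$ is isomorphic to its reverse. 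Since reversal commutes with ultrapowers, $(\bold M^\omega/\mathcal U)^{\ast} = (\bold M^{\ast})^\omega/\mathcal U \cong \bold M^\omega/\mathcal U$, so $\bold M^\omega/\mathcal U$ is isomorphic to its own reverse and therefore its cofinality at $+\infty$ equals its coinitiality at $-\infty$. As isomorphisms preserve both invariants and $\aleph_2\neq\aleph_1$, the orders $\bold M^\omega/\mathcal U$ and $\bold N^\omega/\mathcal U$ cannot be isomorphic, for any $\mathcal U$.

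I expect the main obstacle to be conceptual rather than computational: one must recognize that cardinality carries no information (both ultrapowers have size $2^{\aleph_0}$) and that the correct separating invariant is the \emph{asymmetry} between cofinality and coinitiality, which is forced for $\bold N$ yet impossible for the self-reverse-symmetric ultrapower of the countable model. The only point demanding care is the transfer lemma, where it is precisely the uncountability of $\mu$ played against the countable index set $\omega$ that keeps the supremum $\sup_n\beta_n$ below $\mu$.
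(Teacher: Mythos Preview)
Your argument is correct and shares the paper's core strategy: a cofinality-transfer lemma (your key lemma is exactly the paper's Claim~2.2, with the same proof) together with a symmetry of $(\bbQ,<)$ that lifts to the ultrapower. The implementations diverge in which symmetry is used. The paper chooses $\bold N$ so that there are two \emph{internal} points $a,b$ whose lower cuts have cofinalities $\aleph_1,\aleph_2$; after transferring these to $\bold N^\omega/\mathcal U$, it packages the translations of $\bbQ$ as the ternary function $F(x,y,z)=x-y+z$, takes the ultrapower of the expanded structure $(\bbQ,<,F)$, and obtains an automorphism of $\bold M^\omega/\mathcal U$ sending the image of $a_*$ to the image of $b_*$, forcing the two cofinalities to coincide. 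You instead make $\bold N$ asymmetric at its two \emph{ends} and use the order-reversing map $x\mapsto -x$ to force cofinality and coinitiality to agree in $\bold M^\omega/\mathcal U$. Your route is slightly more economical---no expanded language is needed, and the commutation of reversal with ultrapowers is purely formal---while the paper's version stays entirely within the category of order-preserving maps and shows more: in $\bold M^\omega/\mathcal U$ the cofinality below \emph{every} point is the same.
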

\begin{proof}
 Let $\bold M=(\bbQ, <)$ and let $\bold N$ be a dense linear order of cardinality
$\aleph_2$ such that for some $a, b \in \bold N$ we have $\cf(\bold N_a)=\aleph_1$
and $\cf(\bold N_b)=\aleph_2$, where for each $c \in \bold N,$
\[
\bold N_c=\{ d \in \bold N: d <_{\bold N} c       \}.
\]
We show that $\bold M$ and $\bold N$ are as required. Suppose, towards a contradiction, that for some ultrafilter $\mathcal U$ on $\omega$, there exists an isomorphism   $f: \bold N^{\omega}/ \mathcal U \simeq \bold M^\omega / \mathcal U$. To simplify the notation, let us set
$\bold M_*=\bold M^{\omega}/ \mathcal U \text{~~and~~} \bold N_*=\bold N^\omega / \mathcal U.$ Let
\[
a_* = [\langle a: n<\omega  \rangle]_{\mathcal U} \in \bold N_*
\]
and
\[
b_* = [\langle b: n<\omega  \rangle]_{\mathcal U} \in \bold N_*.
\]
By the choice of elements $a$ and $b$ we have:
\begin{claim}
\label{cl1}
$\cf((\bold N_*)_{a_*})=\aleph_1$ and $\cf((\bold N_*)_{b_*})=\aleph_2.$
\end{claim}
\begin{proof}
Let us show that $\cf((\bold N_*)_{a_*})=\aleph_1$. Pick a $<_{\bold N}$-increasing sequence $\langle  a_i: i<\omega_1       \rangle$
which is $<_{\bold N}$-cofinal in $a$. Then the sequence $\langle (a_i)_*: i<\omega_1        \rangle$, where $(a_i)_*=[\langle a_i: n<\omega      \rangle]_{\mathcal U}$ is an increasing sequence below $a_*.$ Let us show that it is also cofinal in $a_*.$
Thus let $f: \omega \to \bold N$ and suppose that $[f]_{\mathcal U} <_{N_*} a_*.$ Without loss of generality
$f: \omega \to \bold N_{a}$. For each $n<\omega$ pick some $i(n)< \omega_1$ such that $f(n) <_{\bold N} a_{i(n)}$. Let $j=\sup_{n \to \infty}i(n).$
Then $j < \omega_1$ and for every $n<\omega$
\[
f(n) <_{\bold N} a_{i(n)} <_{\bold N} a_j,
\]
in particular $[f]_{\mathcal U} <_{\bold N_*} (a_j)_*$. Thus the sequence  $\langle (a_i)_*: i<\omega_1        \rangle$ is increasing and cofinal in
$a_*$. By the regularity of $\aleph_1,$ we have $\cf((\bold N_*)_{a_*})=\aleph_1$.
% Now suppose that $\langle [f_n]_{\mathcal U}: n<\omega      \rangle$
%is a $<_{\bold N_*}$-increasing sequence below $a_*.$ Without loss of generality, each $f_n: \omega \to \bold N_{a}$. For each $n,$ pick $i_n<\omega_1$ such that for all $k \in \omega, f_n(k) <_{\bold N} a_{i_n}$. Set $i_*=\sup\limits_{n<\omega} i_n.$ Then $i_* < \omega_1$ and for each $n<\omega, [f_n]_{\mathcal U} <_{\bold N_*} (a_i)_*$. Thus the sequence  $\langle [f_n]_{\mathcal U}: n<\omega      \rangle$ is bounded below $a_*$. It follows that $\cf((\bold N_*)_{a_*})=\aleph_1$ as claimed. The proof of $\cf((\bold N_*)_{b_*})=\aleph_2$ is the same.
\end{proof}
Set $a_\dagger=f(a_*)$ and $b_\dagger=f(b_*)$.
\begin{claim}
\label{cl2}
$\cf((\bold M_*)_{a_\dagger})=\aleph_1$ and $\cf((\bold M_*)_{b_\dagger})=\aleph_2.$
\end{claim}
\begin{proof}
It is trivial by the choice of $a_\dagger$ and $b_\dagger$.
\end{proof}
\begin{claim}
\label{cl3}
There is a function $F: \bold M^3 \to \bold M$ such that for every $c, d \in \bold M$, the formula $F(x, c, d)$ defines an automorphism of $\bold M$
which maps $c$ to $d$.
\end{claim}
\begin{proof}
Define $F$ by $F(x, y, z)=x-y+z.$ The function $F$ is easily seen to be as required.
\end{proof}
It follows from Claim \ref{cl3} that for some function $F_*,$ $(\bold M_*, F_*)= (\bold M, F)^{\omega} / \mathcal U.$ Then by the choice of $F$, the function $F_*$ has the following property:
\begin{enumerate}
\item[($\ast$):] $F_*: \bold M_*^3 \to \bold M_*$ is a function such that for all $c, d \in \bold M_*$, the formula $F_*(x, c, d)$ defines an automorphism of $\bold M_*$
which maps $c$ to $d$.
\end{enumerate}
In particular $F_*(x, a_\dagger, b_\dagger )$ defines an automorphism of $\bold M_*$
which maps $a_\dagger$ to $b_\dagger$. Thus we must have
\[
\cf((\bold M_*)_{a_\dagger})=\cf((\bold M_*)_{b_\dagger}),
\]
which contradicts Claim \ref{cl2}.
\end{proof}
By the above result and Keisler's theorem, we have the following corollary.
\begin{corollary}
\label{keqch}
The following are equivalent:
\begin{enumerate}
\item[(a)] $\CH$,

\item[(b)] Keisler's criterion: if $\mathcal L$ is a countable language  and $\bold M, \bold N$ are  $\mathcal L$-models of size $\leq 2^{\aleph_0}$,
then $\bold M \equiv \bold N$ if and only if there exists an ultrafilter $\mathcal U$ on $\omega$ such that  $\bold M^{\omega}/ \mathcal U \simeq \bold N^\omega / \mathcal U.$
\end{enumerate}
\end{corollary}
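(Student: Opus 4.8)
The plan is to prove the two implications separately, each resting on a result already available. For (a) $\Rightarrow$ (b) I would invoke Keisler's theorem \cite{keislr}. Note first that the forward direction of the biconditional in (b)—that an isomorphism of ultrapowers forces $\bold M \equiv \bold N$—is an immediate consequence of {\L}o\'s's theorem and requires no hypothesis on the continuum: from $\bold M^\omega/\mathcal U \simeq \bold N^\omega/\mathcal U$ we get $\bold M^\omega/\mathcal U \equiv \bold N^\omega/\mathcal U$, while {\L}o\'s gives $\bold M \equiv \bold M^\omega/\mathcal U$ and $\bold N \equiv \bold N^\omega/\mathcal U$. The substantive half is the reverse direction. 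Here I would reproduce Keisler's argument as sketched in the introduction: if one of $\bold M, \bold N$ is finite the assertion is trivial (finite elementarily equivalent structures are isomorphic, as are their ultrapowers), and otherwise, assuming $\CH$, for any non-principal ultrafilter $\mathcal U$ on $\omega$ both $\bold M^\omega/\mathcal U$ and $\bold N^\omega/\mathcal U$ are $\aleph_1$-saturated of size $2^{\aleph_0}=\aleph_1$, hence saturated of the same cardinality; being elementarily equivalent, they are isomorphic by the uniqueness of saturated models of a fixed cardinality.

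For (b) $\Rightarrow$ (a) I would argue by contraposition, and this is where Theorem \ref{thm11} does the work. Suppose $\CH$ fails, so $2^{\aleph_0} \geq \aleph_2$. Then the two models $\bold M, \bold N$ furnished by Theorem \ref{thm11}, of sizes $\aleph_0$ and $\aleph_2$ respectively, both have size $\leq 2^{\aleph_0}$ and so are admissible instances of the criterion. Since each is a model of the complete theory $\Th(\bbQ, <)$ we have $\bold M \equiv \bold N$, yet by the conclusion of Theorem \ref{thm11} there is no ultrafilter $\mathcal U$ on $\omega$ with $\bold M^\omega/\mathcal U \simeq \bold N^\omega/\mathcal U$. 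This is precisely a failure of Keisler's criterion, so (b) cannot hold, which establishes the contrapositive.

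I do not expect any genuine obstacle: the corollary is an assembly of Keisler's theorem and Theorem \ref{thm11}, with the elementary-equivalence-from-isomorphism direction supplied freely by {\L}o\'s's theorem. The only point demanding a moment's care is the cardinality bookkeeping in the second implication—one must observe that $\neg\CH$ is exactly what guarantees $\aleph_2 \leq 2^{\aleph_0}$, so that the size-$\aleph_2$ model of Theorem \ref{thm11} genuinely lies within the scope of the criterion. Once this is checked, the equivalence follows at once.
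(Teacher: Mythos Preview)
Your proposal is correct and takes essentially the same approach as the paper: the corollary is stated immediately after Theorem~\ref{thm11} with only the remark ``By the above result and Keisler's theorem, we have the following corollary,'' and your write-up simply fleshes out exactly that derivation. The cardinality check you flag (that $\neg\CH$ ensures the size-$\aleph_2$ model falls within the scope of the criterion) is the one point worth making explicit, and you have done so.
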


\section{Keisler-Shelah theorem for models of cardinality $\aleph_1$}
\label{s2}
In this section, we ask to what extent the Keisler and Shelah isomorphism theorems can hold
 for models of uncountable cardinality. We prove some theorems  which by the result of the previous section are, in some sense, optimal.
 %Let us start by recalling some definitions and a lemma which are needed in the sequel.
  \begin{definition}
 \begin{enumerate}
\item Let $\Cov(\meagre)$ be the minimal size of a family of meagre subsets of the real line that cover it.

\item Given an infinite cardinal $\kappa$, let $\MA_\kappa(\countable)$ be the following statement: if $\mathbb{P}$ is a countable partial order
 and $\mathcal A$ is a family of dense subsets of $\mathbb{P}$ of size $\kappa$, then there exists a filter $\bold G \subseteq \mathbb{P}$
 meeting all sets in $\mathcal A.$
 \end{enumerate}
 \end{definition}
 Our proof relies on the following lemma.
 \begin{lemma}
 \label{covvsma}
 (see \cite[Theorem 7.13]{blass}) Suppose $\Cov(\meagre)=2^{\aleph_0}$. Then $\MA_{\kappa}(\countable)$ holds for all $\kappa < 2^{\aleph_0}$.
 \end{lemma}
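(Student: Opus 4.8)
The plan is to prove the self-contained combinatorial statement that $\MA_\kappa(\countable)$ holds whenever $\kappa < \Cov(\meagre)$; the lemma follows at once, since under the hypothesis we have $\Cov(\meagre) = 2^{\aleph_0}$ and we range over $\kappa < 2^{\aleph_0} = \Cov(\meagre)$. This is (one direction of) the identity $\gm(\countable) = \Cov(\meagre)$, and its engine is a dictionary between dense subsets of a countable forcing and (co)meagre subsets of a Polish space of generic objects.

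First I would treat the paradigm case $\mathbb{P} = 2^{<\omega}$, Cohen forcing. To each dense $D \subseteq 2^{<\omega}$ I associate the set $O_D = \{x \in 2^\omega : (\exists n)\, x\restriction n \in D\} \subseteq 2^\omega$; density of $D$ makes $O_D$ open dense, so $2^\omega \setminus O_D$ is closed nowhere dense, hence meagre. Given a family $\{D_\alpha : \alpha < \kappa\}$ with $\kappa < \Cov(\meagre)$, the $\kappa$ meagre sets $2^\omega \setminus O_{D_\alpha}$ fail to cover $2^\omega$; I pick $x \in \bigcap_{\alpha} O_{D_\alpha}$ and set $G_x = \{s \in 2^{<\omega} : s \subseteq x\}$. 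Then $G_x$ is a filter, and $x \in O_{D_\alpha}$ says exactly that $G_x \cap D_\alpha \neq \emptyset$, so $G_x$ meets every $D_\alpha$.

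Next I would reduce a general countable $\mathbb{P}$ to this picture. I pass to the separative quotient (filters and dense sets transfer without loss) and split according to atoms. Below any condition admitting no atom, $\mathbb{P}$ is countable, atomless and separative, hence forcing-equivalent to $2^{<\omega}$, and the Cohen computation applies verbatim. Where an atom is present, separativity forces the order below it to be linear; the Dedekind completion of this countable linear order embeds into $\mathbb{R}$, so again "find a filter meeting the $D_\alpha$" becomes "find a point of $\mathbb{R}$ outside $\kappa$ nowhere dense sets", solved by $\kappa < \Cov(\meagre)$. Uniformly, one realises the relevant filters as points of a perfect Polish space $X$ of generic branches of $\mathbb{P}$ and checks that $\{G \in X : G \cap D_\alpha = \emptyset\}$ is nowhere dense in $X$ for each $\alpha$.

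The main obstacle is precisely this last reduction: arranging $X$ to be perfect Polish and ensuring that the sets of filters missing a given $D_\alpha$ are genuinely nowhere dense. Atoms are the delicate point, since they produce isolated principal filters that need not meet the dense sets and would spoil a naive meagre-complement argument; controlling them is exactly what makes the separative-quotient step together with the atomless/linear dichotomy necessary. Once $X$ is set up correctly, the hypothesis $\kappa < \Cov(\meagre)$ supplies a point of $\bigcap_{\alpha} \{G \in X : G \cap D_\alpha \neq \emptyset\}$, that is, a filter meeting all $\kappa$ dense sets, which is the assertion of $\MA_\kappa(\countable)$.
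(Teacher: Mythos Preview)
The paper does not prove this lemma; it simply cites \cite[Theorem~7.13]{blass} and uses the result as a black box. So there is no ``paper's own proof'' to compare against --- you are supplying an argument where the authors chose to quote the literature.

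Your strategy is the standard one and is essentially correct: translate the family of dense subsets of $\mathbb{P}$ into a family of comeagre subsets of a Polish space of ``generic'' filters, and then invoke $\kappa<\Cov(\meagre)$ to find a point in the intersection. The Cohen case $\mathbb{P}=2^{<\omega}$ is handled cleanly.

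Two comments on the general case. First, your treatment of atoms is more elaborate than it needs to be, and the linear-order/Dedekind-completion detour is off. In a \emph{separative} poset, an atom (an element below which any two conditions are compatible) is necessarily \emph{minimal}: if $b<a$ strictly, then $a\not\leq b$, so separativity gives $a'\leq a$ with $a'\perp b$, contradicting that $a',b\leq a$ are compatible. Hence the ``linear order'' below an atom is a single point, and the principal filter at that atom meets every dense set trivially (density forces $a\in D$). No embedding into $\mathbb{R}$ is needed.

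Second, the phrase ``forcing-equivalent to $2^{<\omega}$, and the Cohen computation applies verbatim'' hides the actual transfer of dense sets and filters; forcing equivalence is a statement about Boolean completions, not about the posets themselves. A cleaner route that avoids both this issue and the atomic/atomless split: enumerate $\mathbb{P}$, code descending sequences by elements of $\omega^\omega$ (at stage $n$, $x(n)$ picks the $x(n)$-th extension of the current condition), and observe that for each dense $D_\alpha$ the set of $x\in\omega^\omega$ whose associated filter meets $D_\alpha$ is open dense. Then $\kappa<\Cov(\meagre)$ gives the required $x$ directly, with no case analysis.
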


 Let us start by proving Theorem \ref{thm2}.
\begin{theorem}
\label{thm3} Suppose $2^{\aleph_0}> \aleph_1=\cf(2^{\aleph_0})$ and $\Cov(\meagre)=2^{\aleph_0}$. Suppose $\bold M_0 \equiv \bold M_1$
are models of size $\leq \aleph_1$ in the same countable vocabulary $\mathcal L$. Then for some ultrafilter $\mathcal U$ on $\omega,$
$\bold M_0^{\omega}/ \mathcal U \simeq \bold M_1^\omega / \mathcal U.$
%Indeed there exists an ultrafilter $\mathcal U$
%on $\omega$ which works for all pairs $(\bold M_0, \bold M_1)$ as above.
\end{theorem}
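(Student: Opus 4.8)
The plan is to build the ultrafilter $\mathcal U$ and an isomorphism $\bold M_0^\omega/\mathcal U \to \bold M_1^\omega/\mathcal U$ \emph{simultaneously}, by a single transfinite back-and-forth of length $2^{\aleph_0}$. First I would dispose of the trivial cases: if the models are finite then $\bold M_0 \equiv \bold M_1$ already gives $\bold M_0 \simeq \bold M_1$, so assume both are infinite of size $\le \aleph_1$. For any non-principal $\mathcal U$ the theorem of \L o\'s gives $\bold M_0^\omega/\mathcal U \equiv \bold M_0 \equiv \bold M_1 \equiv \bold M_1^\omega/\mathcal U$, and a cardinal computation gives $|\bold M_j^\omega/\mathcal U| = \aleph_1^{\aleph_0} = 2^{\aleph_0}$ for $j=0,1$. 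Write $\lambda = 2^{\aleph_0}$. Full $\lambda$-saturation is unavailable here—a model of size $\lambda$ cannot realize all types over its $\aleph_1$-sized subsets, of which a non-trivial theory such as $\Th(\bbQ,<)$ has $2^{\aleph_1} > \lambda$ many (note $2^{\aleph_1} > \lambda$ by K\"onig's theorem, as $\cf\lambda = \aleph_1$)—so I will not aim for saturation; instead I realize only the $\lambda$ many types that actually arise in the back-and-forth, choosing $\mathcal U$ as I go.

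I would set up the recursion as follows. Fix enumerations, in order type $\lambda$, of all functions $\omega \to \bold M_0$, of all functions $\omega \to \bold M_1$ (each of size $\lambda$), and of all subsets of $\omega$, interleaved by a bookkeeping that presents every task cofinally while keeping every proper initial segment of size $<\lambda$; this is where $\cf\lambda = \aleph_1$ is used, to run the construction so that the data accumulated before any stage $\alpha<\lambda$ never reaches size $\lambda$. Starting from $\mathcal F_0$ the cofinite filter, at stage $\alpha$ I maintain a filter $\mathcal F_\alpha$ on $\omega$ generated by $<\lambda$ sets and a partial correspondence $F_\alpha$ between functions $\omega \to \bold M_0$ and functions $\omega \to \bold M_1$, of size $<\lambda$, which is \emph{$\mathcal F_\alpha$-elementary}: for every formula $\varphi$ and every paired tuple, $\mathcal F_\alpha$ decides the two truth sets $\{n : \bold M_0 \models \varphi(\bar g(n))\}$ and $\{n : \bold M_1 \models \varphi(\overline{F_\alpha(g)}(n))\}$ and assigns them to the same side. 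The final $\mathcal U = \bigcup_{\alpha<\lambda} \mathcal F_\alpha$ and $F = \bigcup_{\alpha<\lambda} F_\alpha$ will give the isomorphism.

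The engine is the successor step. To adjoin a new $g \colon \omega \to \bold M_0$ (the ``back'' direction is symmetric), I consider the complete type of $g$ over the current domain and transport it through $F_\alpha$ to a type $p$ over the current range on the $\bold M_1$ side; $p$ is finitely satisfiable because $F_\alpha$ is $\mathcal F_\alpha$-elementary and $\bold M_0 \equiv \bold M_1$. Realizing $p$ means producing a partner $h \colon \omega \to \bold M_1$ together with the $<\lambda$ new sets that must enter the filter, i.e.\ a multiplicative refinement of the monotone assignment sending each finite piece of $p$ to its (large) truth set. I obtain this as a sufficiently generic filter for the \emph{countable} partial order of finite approximations to $h$ and to the membership pattern of the refinement, met against the $<\lambda$ dense sets that force the refinement to refine $p$, to be multiplicative, and to stay compatible with the $<\lambda$ generators of $\mathcal F_\alpha$; such a filter exists by $\MA_\kappa(\countable)$ with $\kappa<\lambda$, which holds by Lemma \ref{covvsma} since $\Cov(\meagre) = \lambda$. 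A stage assigned a set $X \subseteq \omega$ is handled by putting $X$ or $\omega \setminus X$ into the filter, again keeping it proper.

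At limits I take unions and check that $\mathcal F_\alpha$ remains $<\lambda$-generated, which the cofinality-$\aleph_1$ bookkeeping guarantees. In the end $\mathcal U$ is a non-principal ultrafilter (every subset of $\omega$ was decided) and $F$ induces a map $\bold M_0^\omega/\mathcal U \to \bold M_1^\omega/\mathcal U$ that is well defined and injective (instances of $\varphi$ equal to ``$x=y$'' are matched), surjective (all functions $\omega \to \bold M_1$ were enumerated), and elementary, hence an isomorphism. The main obstacle is exactly the coupled successor step: packaging the type-realization as a \emph{countable} poset so that $\MA_\kappa(\countable)$ applies, while simultaneously keeping the growing filter proper and $<\lambda$-generated and never being forced to both commit a set to $\mathcal U$ and later require its complement, so that every earlier pairing stays $\mathcal F_\alpha$-elementary. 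The delicate accounting that keeps the generator count below $\lambda$ at every stage, for which the hypotheses $\cf(2^{\aleph_0}) = \aleph_1$ and $\Cov(\meagre) = 2^{\aleph_0}$ are tailored, is where the real work lies; once $\mathcal U$ and $F$ are built, the verification that $F$ descends to an isomorphism is routine.
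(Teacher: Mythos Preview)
Your overall architecture matches the paper's exactly: build $\mathcal U$ and the bijection simultaneously by a back-and-forth of length $2^{\aleph_0}$, keep the filter generated by fewer than $2^{\aleph_0}$ sets, and handle each successor step by meeting fewer than $2^{\aleph_0}$ dense sets in a countable poset via $\MA_\kappa(\countable)$, which Lemma~\ref{covvsma} supplies. Where you diverge from the paper is precisely the point you flag as ``the main obstacle,'' and there is a genuine gap there.

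You describe the successor poset as ``the \emph{countable} partial order of finite approximations to $h$,'' but if $h\colon\omega\to\bold M_1$ and $|\bold M_1|=\aleph_1$, the set of finite partial functions $\omega\to\bold M_1$ has size $\aleph_1$, not $\aleph_0$, so $\MA_\kappa(\countable)$ does not apply. The paper's fix is to choose in advance an increasing continuous chain $\langle \bold M^\ell_i : i<\omega_1\rangle$ of \emph{countable} elementary submodels with union $\bold M_\ell$, together with a cofinal sequence $\langle\lambda_i:i<\omega_1\rangle$ in $2^{\aleph_0}$, and to arrange the enumerations so that every function handled before stage $\lambda_i$ already lives in $(\bold M^\ell_i)^\omega$. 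Then at stage $\alpha<\lambda_i$ the partner $h$ may be sought inside $(\bold M^1_i)^\omega$, and the poset of finite partial maps $\omega\to \bold M^1_i$ is genuinely countable. Density of the relevant sets uses that $\bold M^0_i\equiv\bold M^1_i$ (both being elementary in the $\bold M_\ell$'s) to transfer the witness $g^0_\alpha(k)$ to some $y\in\bold M^1_i$.

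This is also where $\cf(2^{\aleph_0})=\aleph_1$ actually enters. Your stated reason---``so that the data accumulated before any stage $\alpha<\lambda$ never reaches size $\lambda$''---is automatic for any $\lambda$ and any $\alpha<\lambda$. The hypothesis is used to synchronise two chains of the \emph{same} length $\omega_1$: the cofinal sequence $\lambda_i$ in $2^{\aleph_0}$ and the elementary chain $\bold M^\ell_i$ exhausting the $\aleph_1$-sized models. Without that synchronisation you cannot guarantee that the successor poset stays countable throughout the construction.
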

\begin{proof}
Before giving the details of the proof, let us sketch the main idea. We would like to find
an ultrafilter $\mathcal U$ on $\omega$ and enumerations $\langle g^0_\alpha: \alpha < 2^{\aleph_0}    \rangle$
and $ \langle  g^1_\alpha: \alpha< 2^{\aleph_0}          \rangle$ of $\bold M_0^\omega$ and $\bold M_1^\omega$ respectively, such that
\begin{equation}
\label{eq1}
((\bold M_0)^\omega/\mathcal U, [g^0_0]_{\mathcal U}, \cdots, [g^0_\alpha]_{\mathcal U}, \cdots  ) \equiv ((\bold M_1)^\omega/\mathcal U, [g^1_0]_{\mathcal U}, \cdots, [g^1_\alpha]_{\mathcal U}, \cdots  ).
\end{equation}
This will show that the function $\langle  ([g^0_\alpha]_{\mathcal U}, [g^1_\alpha]_{\mathcal U}): \alpha < 2^{\aleph_0}                 \rangle$
is an isomorphism between $\bold M_0^{\omega}/ \mathcal U$ and $\bold M_1^\omega / \mathcal U.$
On the other hand, by Lo\'{s} theorem,  \ref{eq1} is equivalent to saying that for all $\mathcal L$-formula $\phi(x_0, \cdots, x_{n-1})$
and all $\beta_0, \cdots, \beta_{n-1}< 2^{\aleph_0}$,
\[
\bigg\{k<\omega: \bold M_0 \models \phi(g^0_{\beta_0}(k), \cdots, g^0_{\beta_{n-1}}(k)) \Leftrightarrow \bold M_1 \models \phi(g^1_{\beta_0}(k), \cdots, g^1_{\beta_{n-1}}(k))\bigg\} \in \mathcal U.
\]
We define by induction on $\alpha < 2^{\aleph_0}$, a sequence $\langle (\mathcal U_\alpha, g^0_\alpha, g^1_\alpha): \alpha < 2^{\aleph_0} \rangle$,
where $\langle \mathcal U_\alpha: \alpha < 2^{\aleph_0} \rangle$ is an increasing and continuous chain of filters on
$\omega$ such that \ref{eq1} holds whenever $\mathcal U$ replaced by $\mathcal U_{\alpha+1}$. To make sure that $g^0_\alpha$'s and $g^1_\alpha$'s enumerate
all elements of $\bold M_0^\omega$ and $\bold M_1^\omega$ respectively, we use a back and forth construction. To make sure that the construction
continues to work at all levels below $2^{\aleph_0}$, we use the assumption $\Cov(\meagre)=2^{\aleph_0}$ and proceed in such a way that $\mathcal U_\alpha$ is generated by $\leq \aleph_0+|\alpha|$ many elements.

Let us now go into the details of the proof.
Let $\langle \lambda_i: i<\omega_1  \rangle$
be an increasing and continuous sequence of cardinals $\geq \aleph_1$ cofinal in $2^{\aleph_0}$ and for $\ell <2$ let $\langle \bold M^\ell_i: i<\omega_1  \rangle$
be an increasing and continuous chain of elementary submodels of $\bold M_\ell$ such that for all $i<\omega_1, ||\bold M^\ell_i|| = \aleph_0$
and $\bold M_\ell = \bigcup\limits_{i<\omega_1}\bold M^\ell_i$.
Let
$\langle  f^\ell_{\alpha}: \alpha < 2^{\aleph_0}    \rangle$
be an enumeration of $\bold M_\ell^{\omega}$ such that
\[
\alpha < \lambda_i \implies f^\ell_{\alpha} \in ( \bold M^{\ell}_{i})^{\omega}.
\]
Let also
$\langle X_\alpha: \alpha < 2^{\aleph_0}   \rangle$ enumerate $\mathcal P(\omega)$.
By induction on $\alpha < 2^{\aleph_0}$ and using a back and forth construction, we build the triple $(\mathcal U_\alpha, g^0_{\alpha},
g^1_{ \alpha})$
such that:
\begin{enumerate}
\item[(a)] $g^0_{\alpha} \in \bold M_0^\omega$,
furthermore if $\alpha < \lambda_i$, then $g^0_{\alpha} \in (\bold M^0_i)^\omega$,

\item[(b)] $g^1_{\alpha} \in \bold M_1^\omega$,
furthermore if $\alpha < \lambda_i$, then $g^1_{\alpha} \in (\bold M^1_i)^\omega$,

\item[(c)] for $i<\omega_1$ and $\ell < 2$, $\{g^\ell_{\alpha}: \alpha < \lambda_i   \}= \{f^\ell_{\alpha}: \alpha < \lambda_i   \},$

\item[(d)] $\mathcal U_\alpha$ is a filter on $\omega$ generated by $\leq \aleph_0+|\alpha|$ sets containing all co-finite subsets of $\omega,$

\item[(e)] if $\phi(x_0, \cdots, x_{n-1})$ is a formula of $\mathcal L$ and $\beta_0, \cdots, \beta_{n-1} \leq  \alpha,$ then the set
$Y_{\phi, \langle \beta_0, \cdots, \beta_{n-1}\rangle}$ defined as
\[
\bigg\{ k< \omega: \bold M_0 \models \phi(g^0_{\beta_0}(k), \cdots, g^0_{\beta_{n-1}}(k)) \Leftrightarrow
\bold M_1 \models \phi(g^1_{\beta_0}(k), \cdots, g^1_{\beta_{n-1}}(k))                         \bigg\},
\]
belongs to $\mathcal U_{\alpha+1},$

\item[(f)] if $\alpha < \beta< 2^{\aleph_0},$ then $\mathcal U_\alpha \subseteq \mathcal U_\beta$.

\item[(g)] if $\alpha$ is a limit ordinal, then $\mathcal U_\alpha=\bigcup\limits_{\beta<\alpha}\mathcal U_\beta$,

\item[(h)] for all $\alpha < 2^{\aleph_0},$ either $X_\alpha \in \mathcal U_{\alpha+1}$ or $\omega\setminus X_\alpha \in \mathcal U_{\alpha+1}$.
\end{enumerate}
As in Shelah \cite[\text{Ch} VI, \S 3]{sh:c}, there is no problem in carrying the induction, however let us elaborate the main point
of the proof. The only difficulty in carrying the induction is clause (e).
Thus suppose that $\alpha< 2^{\aleph_0}$ and the construction is done up to $\alpha.$ Let also $i<\omega_1$ be such that
$\alpha < \lambda_i.$
First suppose that $\alpha$ is an even ordinal. Let $g^0_\alpha=f^0_{\gamma_\alpha}$,
where $\gamma_\alpha$ is the least ordinal such that $f^0_{\gamma_\alpha} \notin \{g^0_\beta: \beta < \alpha    \}.$
Note that $\gamma_\alpha < \lambda_i.$ Let also $\mathscr{G}(\mathcal U_\alpha)$
be a set of generators of $\mathcal U_\alpha$ of size $\leq \aleph_0+|\alpha|$.

Let $\bbP$ be the forcing notion consisting of all maps $p: \dom(p) \to M^1_i$, where $\dom(p)$ is a finite subset of $\omega,$ ordered by inclusion. $\bbP$
is countable. Define the following  sets:
\begin{itemize}
\item $D_n=\{p \in \bbP: n \in  \dom(p)         \}$, where $n<\omega$.
\item For any set $A \in \mathscr{G}(\mathcal U_\alpha)$, any finite sequence  $\vec \phi=\langle \phi_\iota(x_0, \cdots, x_{n_\iota-1}, y): \iota \in I \rangle$, any finite sequence $\langle \vec{\beta_\iota^\ell}: \iota \in I, \ell \in J_\iota \rangle$, where $\vec{\beta_\iota^\ell}=\langle \beta^\ell_{\iota,0}, \cdots, \beta^\ell_{\iota, n_\iota-1} \rangle$ consists of ordinals less than $\alpha$ and $m<\omega$ let $\Sigma_{A, \vec \phi, \langle \vec{\beta_\iota^\ell}: \iota \in I, \ell \in J_\iota\rangle, m}$ be the set of all conditions $p \in \bbP$ such that for some $k \in \dom(p) \cap A$ with $k> m$
    and all $\iota \in I$ and $\ell \in J_\iota:$
\[
\bold M^0_i \models \phi_\iota(g^0_{\beta^\ell_{\iota,0}}(k), \cdots, g^0_{\beta^\ell_{\iota, n_\iota-1}}(k), g^0_\alpha(k)) \Leftrightarrow \bold M^1_i \models \phi_\iota(g^1_{\beta^\ell_{\iota,0}}(k), \cdots, g^1_{\beta^\ell_{\iota, n_\iota-1}}(k), p(k)).
\]
\end{itemize}
Let us show that each of the sets defined above  are dense in $\bbP$. This is clear for the sets $D_n$. Now suppose that
$A, \vec \phi, $  $\langle \vec{\beta_\iota^\ell}: \iota \in I, \ell \in J_\iota \rangle$ and $m$ are given as above
and suppose that $p \in \bbP$. We find some $q\supseteq p$ in $\Sigma_{A, \vec \phi, \langle \vec{\beta_\iota^\ell}: \iota \in I, \ell \in J_\iota\rangle, m}$. Let $k > m, \max(\dom(p))$ be such that $k \in A$. Such a $k$ exists as $A$ is unbounded in $\omega.$ Now
$x=g^0_\alpha(k)$ witnesses
\[
\bold M^0_i \models \exists x \bigwedge_{\iota \in I, \ell \in J_\iota}\phi_\iota(g^0_{\beta^\ell_{\iota,0}}(k), \cdots, g^0_{\beta^\ell_{\iota, n_\iota-1}}(k),x),
\]
and hence by our induction hypothesis  we can find some $y \in \bold M^1_i$ such that
\[
M^1_i \models \bigwedge_{\iota \in I, \ell \in J_\iota} \phi_\iota(g^1_{\beta^\ell_{\iota,0}}(k), \cdots, g^1_{\beta^\ell_{\iota, n_\iota-1}}(k), y).
\]
Set $q=p \cup \{(k, y)\}$. Then $q \in \bbP$ is as required.

The number of the sets we defined above is at most
\[
\aleph_0 + |\mathscr{G}(\mathcal U_\alpha)| \cdot \aleph_0 \cdot |\alpha|^{<\aleph_0} \cdot \aleph_0 =\aleph_0+|\alpha|
\]
which is less than $\lambda_i$, and hence as $\MA_{\lambda_i}(\countable)$
holds, there exists a filter $\bold G \subseteq \bbP$ meeting all the above dense sets. Set $g^1_\alpha=\bigcup_{p \in \bold G}p$. Let
$\mathcal U'_{\alpha+1}$ be the filter generated by
\begin{center}
$\mathcal U_\alpha \cup \{Y_{\phi, \langle \beta_0, \cdots, \beta_{n-1}  \rangle}: \phi, \beta_0, \cdots, \beta_{n-1}$ as in clause (e)$\}$.
 \end{center}
 By the choice of the sets
$\Sigma_{A, \vec \phi, \langle \vec{\beta_\iota^\ell}: \iota \in I, \ell \in J_\iota\rangle, m}$, the above set
 has the finite intersection property and hence $\mathcal U'_{\alpha+1}$ is a proper filter. Now let $\mathcal{U}_{\alpha+1}$
 be the filter generated by $\mathcal U'_{\alpha+1} \cup \{X_\alpha\}$ if this is a proper filter and let
 $\mathcal{U}_{\alpha+1}$
 be the filter generated by $\mathcal U'_{\alpha+1} \cup \{\omega \setminus X_\alpha\}$ otherwise.
If $\alpha$ is an odd ordinal, proceed in the same way, changing the role of the indices $0$ and $1$.

This completes the induction construction.
Set
\[
\mathcal U=\bigcup\{ \mathcal U_\alpha: \alpha < 2^{\aleph_0}          \}.
\]
Then $\mathcal U$ is a non-principal ultrafilter on $\omega$ and $\bold M_0^{\omega}/ \mathcal U \simeq \bold M_1^\omega / \mathcal U$
as witnessed by the function
\[
\langle   ([g^0_{\alpha}]_{\mathcal U}, [g^1_{\alpha}]_{\mathcal U}) : \alpha < 2^{\aleph_0}                \rangle.
\]
This completes the proof of the theorem.
\end{proof}
We close the paper by proving the following consistency result, which is an analogue of Theorem \ref{thm2}, but the cofinality restriction
on $2^{\aleph_0}$ is removed.

 Let us recall that the Cohen forcing  $\Add(\omega, \lambda)$
 for adding $\lambda$ many new Cohen reals is defined as
 $\Add(\omega, \lambda)=\{p: p$ is a finite partial function from $\omega \times \lambda$ into $2     \}$,
 ordered by inclusion.
 \begin{remark} (see \cite[Proposition 22.10]{lorenz})
  $\Add(\omega, \lambda)$  forces $ \Cov(\meagre)=2^{\aleph_0}$.
 \end{remark}
\begin{theorem}
\label{lm1}
Suppose $\lambda> \aleph_1$ and $\lambda^{\aleph_0}=\lambda.$ Let $\bbP=\Add(\omega, \lambda)$. Then in $V[\bold G_{\bbP}]$, the following holds:
%\begin{enumerate}
%\item[(a)]
if $\bold M_0 \equiv \bold M_1$
are models of size $\leq \aleph_1$ of the same countable vocabulary $\mathcal L$, then for some ultrafilter $\mathcal U$ on $\omega,$
$\bold M_0^{\omega}/ \mathcal U \simeq \bold M_1^\omega / \mathcal U.$

%\item[(b)] there are models $\bold M_0 \equiv \bold M_1$
% of size $ \aleph_1$ of the same  vocabulary $\mathcal L$ of size $\aleph_1$, such that for no ultrafilter $\mathcal U$ on $\omega,$
%$\bold M_0^{\omega}/ \mathcal U \simeq \bold M_1^\omega / \mathcal U.$
%\end{enumerate}
\end{theorem}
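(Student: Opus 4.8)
The plan is to reduce to the case where the models sit in a small Cohen extension and then re-run the back-and-forth of Theorem~\ref{thm3} (following Shelah \cite[Ch.\ VI, \S3]{sh:c}), replacing the appeal to $\MA_{<2^{\aleph_0}}(\countable)$ by genuine Cohen genericity coming from the generic $\bold G$. It is precisely this replacement that should let us dispense with the cofinality hypothesis $\cf(2^{\aleph_0})=\aleph_1$ of Theorem~\ref{thm3}.

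Since $\bbP=\Add(\omega,\lambda)$ is ccc and each $\bold M_\ell$ has size $\le\aleph_1$, a nice-name count produces $A\subseteq\lambda$ with $|A|=\aleph_1$ such that (copies of) $\bold M_0,\bold M_1$ already lie in $W:=V[\bold G\restriction A]$. Using $\bbP\cong\Add(\omega,A)\times\Add(\omega,\lambda\setminus A)$ and $|\lambda\setminus A|=\lambda$, write $V[\bold G]=W[\bold H]$ with $\bold H$ generic for $\Add(\omega,\lambda)$ over $W$. Fix in $W$ a continuous increasing chain $\langle\bold M^\ell_i:i<\omega_1\rangle$ of countable elementary submodels with $\bold M_\ell=\bigcup_i\bold M^\ell_i$; since every $f\in\bold M_\ell^\omega$ has countable range and $\omega_1$ is regular, $\bold M_\ell^\omega=\bigcup_{i<\omega_1}(\bold M^\ell_i)^\omega$ in $V[\bold G]$.

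I would then construct $\langle(\mathcal U_\alpha,g^0_\alpha,g^1_\alpha)\rangle$ satisfying the analogues of (a)--(h), keeping the whole computation inside the chain so that the approximation poset at each stage---finite partial maps $\omega\to\bold M^1_i$ together with the density requirements $\Sigma_{A,\vec\phi,\dots}$ of Theorem~\ref{thm3}---remains \emph{countable}, hence does not collapse $\aleph_1$. The sole departure from Theorem~\ref{thm3} is how $g^{1-\ell}_\alpha$ is chosen and $\mathcal U_\alpha$ enlarged: rather than solving the $\le\aleph_0+|\alpha|$ dense sets by $\MA$, I would feed in a coordinate of $\bold H$ mutually generic over the model $N_\alpha$ holding the data built so far. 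Regarding $\Add(\omega,\lambda)$ as a finite-support product of Cohen forcings, such a coordinate meets \emph{every} dense subset of the countable poset lying in $N_\alpha$, so clause (e) survives even once the number of dense sets reaches $2^{\aleph_0}$---the exact threshold at which $\MA$, and with it the cofinality hypothesis of Theorem~\ref{thm3}, gives out.

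The main obstacle is the book-keeping that reconciles two conflicting demands once $\cf(\lambda)>\aleph_1$. On one hand, to keep the approximation poset countable (and $\aleph_1$-preserving) and the $\Sigma$'s dense, every previously chosen $g^1_\beta$ must have range inside the current countable submodel $\bold M^1_i$, which forces the recursion to climb the chain $\langle\bold M^1_i\rangle$ monotonically. On the other hand, genericity at stage $\alpha$ requires a Cohen coordinate fresh over $N_\alpha$, while the given functions $g^0_\beta$ use coordinates spread through all of $\lambda$. When $\cf(\lambda)=\aleph_1$ the chain-respecting enumeration of Theorem~\ref{thm3} reconciles these automatically; for $\cf(\lambda)>\aleph_1$ no such enumeration exists, and the delicate point is to drive the recursion along an ordinal of cardinality $\lambda$ that ascends the $\omega_1$-chain while, at every proper initial segment, simultaneously bounding the ``height'' by a countable submodel and preserving a supply of fresh Cohen coordinates for the ``width.'' I expect this to be the crux, to be handled using $\lambda^{\aleph_0}=\lambda$ (so that every relevant object has countable support, bounded below $\lambda$) together with the homogeneity and product structure of $\Add(\omega,\lambda)$. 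Granting it, $\mathcal U=\bigcup_\alpha\mathcal U_\alpha$ is a non-principal ultrafilter and, by Lo\'s's theorem as in Theorem~\ref{thm3}, $\alpha\mapsto([g^0_\alpha]_{\mathcal U},[g^1_\alpha]_{\mathcal U})$ is an isomorphism $\bold M_0^\omega/\mathcal U\simeq\bold M_1^\omega/\mathcal U$.
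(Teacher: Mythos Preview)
Your proposal is essentially correct and follows the paper's approach. The paper first disposes of the case $\cf(\lambda)\le\aleph_1$ by appealing directly to Theorem~\ref{thm3}; for $\cf(\lambda)>\aleph_1$ it carries out exactly the book-keeping you anticipate by re-indexing $\bbP$ as $\Add(\omega,\lambda\cdot\omega_1)$, setting $\lambda_i=\lambda\cdot(1+i)$ and $\bbP_i=\Add(\omega,\lambda\cdot i)$, observing that every subset of $\omega$ in $V[\bold G_\bbP]$ already lies in some $V[\bold G_{\bbP_i}]$, and then running the recursion of Theorem~\ref{thm3} along the ordinal $\lambda\cdot\omega_1$---precisely your ``ordinal of cardinality $\lambda$ that ascends the $\omega_1$-chain,'' with the fresh Cohen coordinates supplied by the block $[\lambda\cdot i,\lambda\cdot(i+1))$ at level $i$.
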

\begin{proof}
%First note  that $V[\bold G_\bbP]\models\text{``} \Cov(\meagre)=\lambda\text{''}.$
We may assume that $\cf(\lambda)> \aleph_1$, as otherwise the result follows from Theorem \ref{thm2}.
Now suppose that $\bold M_0 \equiv \bold M_1$ are models of size $\leq \aleph_1$ of a countable vocabulary in $V[\bold G_\bbP]$.
Then for some $\bar\lambda < \lambda,\ \bold M_0, \bold M_1 \in V[\bold G_{\bbP \restriction \bar\lambda}].$
By replacing $V$ by $V[\bold G_{\bbP \restriction \bar\lambda}]$,
we may assume that $\bold M_0, \bold M_1 \in V.$

As $|\lambda\cdot\omega_1|=\lambda$, we may assume that $\bbP$ is $\Add(\omega, \lambda\cdot\omega_1)$ so that forcing with $\bbP$
adds a sequence $\langle r_{\alpha, i}: \alpha < \lambda, i<\omega_1      \rangle$
of reals of order type $\lambda \cdot\omega_1$.

For $i<\omega_1$, set $\bbP_i=\Add(\omega, \lambda \cdot i)$. As $\bbP$ is c.c.c., for every $X \subseteq \omega, X \in V[\bold G_{\bbP}]$, there exists some $i<\omega_1$ such that $X \in V[\bold G_{\bbP_i}]$.
Proceed as in the proof of
 Theorem \ref{thm2} with:
 \begin{itemize}
 \item $\lambda_i=\lambda \cdot (1+i)$,
 \item $\langle \bold M^\ell_i: i<\omega_1 \rangle$ as there,
 %\item $\bold M^\ell_i=\bold M_\ell \cap V[\bold G_{\bbP_i}]$. Note that as the forcing is homogeneous, $\bold M^\ell_i \equiv \bold M_\ell.$
 \item $\langle f^\ell_{\alpha}: \alpha < 2^{\aleph_0} \rangle$ is an enumeration of $\bold M_\ell^\omega$ in such a way that
 for
 $\alpha < \lambda \cdot(1+i)$, $f^\ell_{\alpha} \in \bold M^\ell_i.$
 \end{itemize}
The rest of the argument is essentially as before.
\end{proof}
\subsection*{Acknowledgements} The authors thank the referee of the paper for his/her very  helpful comments and suggestions.

\iffalse
\begin{remark}
In $V[\bold G_\bbP]$, there exist  models $\bold M, \bold N$ of size $\aleph_1$  in a vocabulary of size $\aleph_1$ such that $\bold M \equiv \bold N$
but for no ultrafilter $\mathcal U$ on $\omega$ we have $\bold M^{\omega}/ \mathcal U \simeq \bold N^\omega / \mathcal U.$
\end{remark}
\fi

\end{document}